\newtheorem{theorem}{Theorem}[section]
\newtheorem{lemma}[theorem]{Lemma}
\newtheorem{corollary}[theorem]{Corollary}
\theoremstyle{definition}
\newtheorem*{definition*}{Definition}
\theoremstyle{remark}
\newtheorem{remark}[theorem]{Remark}
\numberwithin{equation}{section}
\DeclareMathOperator{\conv}{conv}
\DeclareMathOperator{\inte}{int}
\renewcommand{\epsilon}{\varepsilon}
\begin{document}

\title{Billiards in convex bodies with acute angles}

\author{Arseniy~Akopyan{$^\spadesuit$}}
\thanks{$^\spadesuit$ Supported by People Programme (Marie Curie Actions) of the European Union's Seventh Framework Programme (FP7/2007-2013) under REA grant agreement n$^\circ$[291734].}
\address{{$^\spadesuit$}Arseniy~Akopyan, Institute of Science and Technology Austria (IST Austria), Am Campus~1, 3400 Klosterneuburg, Austria}
\email{akopjan@gmail.com}

\author{Alexey~Balitskiy{$^\clubsuit$}}
\thanks{{$^\clubsuit$} Supported by the Russian Foundation for Basic Research grant 15-31-20403 (mol\_a\_ved).}
\thanks{{$^\clubsuit$} Supported by the Russian Foundation for Basic Research grant 15-01-99563 A}
\thanks{{$^\clubsuit$} Supported in part by the Moebius Contest Foundation for Young Scientists.}
\thanks{{$^\clubsuit$} Supported in part by the Simons Foundation.}
\address{{$^\clubsuit$}Alexey~Balitskiy, Dept. of Mathematics, Moscow Institute of Physics and Technology, Institutskiy per. 9, Dolgoprudny, Russia 141700 \newline
{$^\clubsuit$}Institute for Information Transmission Problems RAS, Bolshoy Karetny per. 19, Moscow, Russia 127994
}
\email{alexey\_m39@mail.ru}

\maketitle

\begin{abstract}
In this paper we investigate the existence of closed billiard trajectories in not necessarily smooth convex bodies. In particular, we show that if a body $K\subset \mathbb{R}^d$ has the property that the tangent cone of every non-smooth point $q\in \partial K$ is acute (in a certain sense) then there is a closed billiard trajectory in $K$.
\end{abstract}

\section{Introduction}
\label{section:introduction}

The problem of existence of closed billiard trajectories in certain domains has a long history (a good reference for a general discussion is~\cite{tabachnikov2005geometry}). It was established that any smooth convex body $K\subset \mathbb{R}^d$ has a closed billiard trajectory with $m$ bounces at the boundary of $K$, for prime $m$ and for some other $m$. For example, some lower bounds for the number of such trajectories in terms of $d$ and $m$ were studied in~\cite{birknoff1960dynamical,farber2002topology2, farber2002topology, karasev2009periodic}.

Another source for substantial current interest of studying billiard trajectories (in more general setting, with the length measured using arbitrary Minkowski norm) is in their relation to symplectic geometry and Hamiltonian dynamics (see~\cite{artstein2014bounds} where the connection is established between billiards and the Hofer--Zehnder symplectic capacity of a Lagrangian product) and classical problems in convexity theory (see~\cite{artstein2014from} where the Mahler conjecture is deduced from the Viterbo conjecture on the volume--capacity inequality using the billiard technique).

In this paper we study the question of existence of closed billiard trajectory in a non-smooth convex body $K$ in $\mathbb{R}^d$. The famous problem of this kind is the widely open problem of existence of a closed billiard trajectory in an obtuse triangle; the strongest result at the moment is the existence of a closed billiard trajectory in triangles with angles not greater then $100^\circ$ (see~\cite{schwartz2009obtuse}).

We have nothing to say about obtuse triangles; instead we mainly consider ``acute-angled'' convex bodies and show that the minimal (by length) ``generalized'' trajectory should be ``classical''. This is the main idea of this paper, though the detail are different in several different theorems presented here.

Let us give some precise definitions. We are going to distinguish between two types of trajectories:

\begin{itemize}
\item Classical trajectories may only have reflection (bounce) points on smooth part of the boundary $\partial K$. At such points the trajectory is reflected as usual, so that the difference of the unit velocities is proportional to the normal.

\item Generalized trajectories may have also reflection points at non-smooth points of $\partial K$. By definition, a reflection $a \rightarrow q \rightarrow b$ of the trajectory traveling from $a \in K$ to $b \in K$ through $q \in \partial K$ is considered to be \emph{generalized billiard} if the bisector of the angle $\widehat{aqb}$ is orthogonal to some support hyperplane of $K$ at the point $q$ (we suppose here that $q$ does not belong to the segment $[a,b]$). In other words, the difference of unit velocities at the reflection point is proportional to \emph{some} outer normal to $K$ at $q$.
\end{itemize}

Now we define the acuteness precisely:

\begin{definition*}
We say that a non-smooth point $q \in \partial K$ satisfies the \emph{acuteness condition} if the tangent cone $T_K(q)$ can be represented as the orthogonal product $T_K(q) = F \times T^{k}$, where $T^k$ is a $k$-dimensional cone with property that for all points $a,b \in T^k$ the inequality $\widehat{aqb} < \pi/2$ holds, and $F$ is an $(d-k)$-dimensional linear subspace orthogonal to $T^k$.
\end{definition*}

\begin{definition*}
If all non-smooth points of $\partial K$ satisfy the above acuteness condition we call $K$ an \emph{acute body}.
\end{definition*}

The main results are the following theorems:

\begin{theorem}
	\label{thm:acute}
	In an acute convex body $K \subset \mathbb{R}^d$ there exists a closed classical billiard trajectory with no more than $d+1$ bounces.
\end{theorem}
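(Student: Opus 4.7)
\textbf{Proof strategy for Theorem~\ref{thm:acute}.} The plan is to start with a shortest closed \emph{generalized} billiard trajectory in $K$, whose existence with at most $d+1$ bounces is obtained by a variational argument in the spirit of Bezdek--Bezdek, and then use the acuteness hypothesis to rule out bounces at non-smooth points; this forces the minimiser to be \emph{classical}. Concretely, the infimum of lengths of closed polygons in $\mathbb{R}^d$ that cannot be translated into $\mathbb{R}^d \setminus K$ is attained by a closed generalized billiard trajectory $\Gamma = [q_1, \dots, q_n]$ with $n \leq d+1$. After discarding any redundant vertex that lies on the segment joining its two neighbours (which does not change the length), we may assume that at each vertex $q_i$ the two unit vectors $u, v$ from $q_i$ to $q_{i-1}$ and $q_{i+1}$ satisfy $u + v \neq 0$.

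Suppose toward contradiction that some vertex $q := q_i$ is a non-smooth point of $\partial K$, so that $T_K(q) = F \times T^k$ with $T^k$ pointed and acute. Writing $u = u_F + u_T$ and $v = v_F + v_T$ for the orthogonal decomposition into $F$ and $\lin T^k$, the generalized billiard condition (bisector parallel to a normal) together with $u, v \in T_K(q)$ forces $u + v$ to lie in $-N_K(q) \cap T_K(q) = \{0\}_F \times T^k$, where we used the acuteness inclusion $T^k \subseteq -(T^k)^\circ$. This yields $u_F + v_F = 0$ and $u + v = u_T + v_T \in T^k$. Nondegeneracy gives $u + v \neq 0$, and pointedness of $T^k$ excludes the case $u_T + v_T = 0$ (which would force $u_T = v_T = 0$ and hence $u + v = u_F + v_F = 0$). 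Therefore $u_T + v_T \neq 0$.

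Consider now the perturbed polygon $\Gamma'_\epsilon$ obtained by replacing $q$ with $q'_\epsilon := q + \epsilon (u_T + v_T)$ for small $\epsilon > 0$. Since $u_T + v_T \in T^k$ is a strictly inward direction at $q$, the point $q'_\epsilon$ lies in $\inte K$, and a direct first-order expansion yields
\[
|q_{i-1}\,q'_\epsilon| + |q'_\epsilon\,q_{i+1}| - |q_{i-1}\,q| - |q\,q_{i+1}| = -\epsilon\,|u_T + v_T|^2 + O(\epsilon^2),
\]
so $\Gamma'_\epsilon$ is strictly shorter than $\Gamma$ for sufficiently small $\epsilon$. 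The main obstacle is to confirm that $\Gamma'_\epsilon$ still cannot be translated into $\mathbb{R}^d \setminus K$, i.e.\ remains in the minimisation class. The natural expectation is that displacing a single vertex strictly into the interior of $K$ cannot destroy the trap property for small $\epsilon$; verifying this---by a compactness argument on the set of critical translates $t$ for which $\Gamma + t$ touches $K$ only tangentially, and checking that each such contact survives the perturbation---is the technical crux. Once this is in place, $\Gamma'_\epsilon$ gives a closed trap polygon strictly shorter than the minimiser $\Gamma$, contradicting minimality; hence no vertex of $\Gamma$ is non-smooth, and $\Gamma$ is a closed classical billiard trajectory in $K$ with $n \leq d+1$ bounces.
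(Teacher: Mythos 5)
Your reduction to a shortest closed \emph{generalized} trajectory with at most $d+1$ bounces, and the first half of your analysis at a non-smooth vertex (using $u,v\in T_K(q)$, $u+v\in -N_K(q)$, and the acuteness inclusion $T^k\subseteq -(T^k)^\circ$ to place $u+v$ in $T^k\setminus\{0\}$) are fine. But the perturbation you build from it cannot work, and the step you defer as the ``technical crux'' is not a technicality --- it is provably false. The minimisation class in the Bezdek--Bezdek framework consists of polygons whose vertex set cannot be translated into $\inte K$ (not into $\mathbb{R}^d\setminus K$, as you write), and by Theorem~\ref{thm:bezdeks} the minimum of length over that class equals $\xi(K)=\ell(\Gamma)$. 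Hence \emph{every} closed polygon of length strictly less than $\xi(K)$ fits into some translate of $\inte K$; since your $\Gamma'_\epsilon$ is strictly shorter than $\Gamma$, it is guaranteed to leave the class, and no compactness argument on critical translates can save it. The underlying heuristic is also false in isolation: pushing a vertex of a ``trap'' polygon strictly into $\inte K$ does destroy the trap property in the simplest examples (take $K$ the unit disk and the two-bounce diameter $\{(-1,0),(1,0)\}$; moving one endpoint inward yields a segment of length $2-\epsilon$ that fits into the open disk after recentering). The certificate of non-translatability (Lemma~\ref{lemma:bezdek}) requires every vertex to sit on a support hyperplane of $K$ with the outer normals surrounding the origin, and your $q'_\epsilon\in\inte K$ carries no support hyperplane at all.

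The repair is to keep all new vertices on support hyperplanes, which is exactly where acuteness must be spent. The paper's route: acuteness yields a two-dimensional section of $N_K(q)$ through the reflection normal $n$ containing an angle of measure $>\pi/2$, hence two support hyperplanes $H_1,H_2$ at $q$ with outer normals $n_1,n_2$ whose positive hull contains $n$. Reflecting $a$ in $H_1$ and $b$ in $H_2$ and replacing the single vertex $q$ by the \emph{two} points $[a',b']\cap H_1$ and $[a',b']\cap H_2$ strictly shortens the polygon (the obtuseness of the angle between $n_1$ and $n_2$ guarantees $\widehat{a'qb'}<\pi$, so the segment $[a',b']$ crosses both hyperplanes and the broken path through $q$ is longer), while Lemma~\ref{lemma:bezdek} still applies because replacing $n$ by $n_1,n_2$ in the list of normals keeps $0$ in their convex hull. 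That is the contradiction with minimality; your one-vertex inward push cannot produce it.
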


The idea of the proof is to show that the shortest closed generalized billiard trajectory do not pass through non-smooth points. In other words, such a trajectory turns out to be always classical. Recall that the shortest closed generalized billiard trajectory always exists and has between $2$ and $d+1$ bounces, this result due to K.~Bezdek and D.~Bezdek is discussed in section~\ref{section:bezdeks}.

\begin{corollary}
\label{corollary:simplex}
In a simplex with all acute dihedral angles (e.g., a simplex close to regular) there exists a closed classical billiard trajectory with $d+1$ bounces.
\end{corollary}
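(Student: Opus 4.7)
The plan is to verify that any simplex $\Delta$ with all acute dihedral angles is an acute body per the paper's definition, invoke Theorem~\ref{thm:acute} to obtain a closed classical billiard trajectory with $m\leq d+1$ bounces, and finally establish the matching lower bound $m\geq d+1$ via a reflection argument with facet normals.

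For the acuteness verification, I would fix a non-smooth point $q$ in the relative interior of a codimension-$k$ face $G=F_{j_1}\cap\cdots\cap F_{j_k}$ with $k\geq 2$, and denote the unit outer normals of the $k$ adjacent facets by $n_{j_l}$. Setting $L=\lin(G-q)$, each $n_{j_l}\perp L$, so the tangent cone splits orthogonally as $T_\Delta(q)=L\oplus T^k$, where $T^k=T_\Delta(q)\cap L^\perp$ is a $k$-dimensional simplicial cone with extreme rays $e_1,\ldots,e_k$. The defining relations $\langle e_l,n_{j_m}\rangle=0$ for $l\neq m$ and $\langle e_l,n_{j_l}\rangle<0$ imply the Gram-matrix identity $G_e=C\,G_n^{-1}\,C$, where $G_e=(\langle e_l,e_m\rangle)$, $G_n=(\langle n_{j_l},n_{j_m}\rangle)$, and $C$ is a positive diagonal matrix. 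The acute dihedral angle hypothesis amounts to $\langle n_{j_l},n_{j_m}\rangle<0$ for $l\neq m$, so $G_n$ is a Stieltjes matrix; classically its inverse has strictly positive entries, and therefore so does $G_e$. Consequently $\langle u,v\rangle>0$ for any two non-zero $u,v\in T^k$ (each being a non-negative combination of the $e_l$), which confirms the acuteness condition at $q$. Theorem~\ref{thm:acute} then supplies a closed classical trajectory with at most $d+1$ bounces.

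For the lower bound I would consider the bounce points $q_1,\ldots,q_m$ on facets $F_{i_1},\ldots,F_{i_m}$, with outgoing unit velocities $v_j$ and outer unit normals $n_{i_j}$. The reflection law gives $v_j-v_{j-1}=\lambda_j n_{i_j}$ for some $\lambda_j>0$, and telescoping around the cycle produces $\sum_{j=1}^m\lambda_j n_{i_j}=0$, i.e.\ a non-negative linear relation among the $d+1$ outer normals of $\Delta$. Since in a $d$-simplex these normals admit, up to positive scaling, a unique positive linear relation, and this relation involves all $d+1$ of them, every facet must be visited, giving $m\geq d+1$ and hence $m=d+1$. The main obstacle is the first step: although the hypothesis only controls codimension-$2$ dihedral angles of $\Delta$, the acuteness condition must be verified at every non-smooth face, and it is the Stieltjes/inverse-positivity identity above that propagates the condition from codimension~$2$ up to arbitrary higher codimensions.
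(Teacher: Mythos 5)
Your proposal is correct, and it reaches the key lemma (that an acute-dihedral-angle simplex is an acute body in the paper's sense) by a genuinely different route. The paper first invokes Fiedler's theorem that every face of a simplex with acute dihedral angles again has acute dihedral angles, identifies the extreme rays $\rho_i$ of the orthogonal factor of the tangent cone, observes that the pairwise angles between these rays are dihedral angles of suitable $(k+2)$-dimensional faces (hence acute), and then uses a spherical convexity argument to conclude that the diameter of the corresponding spherical simplex is attained at an edge and is therefore less than $\pi/2$. You instead work dually with the facet normals: the identity $G_e = C\,G_n^{-1}\,C$ together with the inverse-positivity of irreducible Stieltjes matrices gives $\langle e_l, e_m\rangle > 0$ for all pairs, and since every vector of the cone is a nonnegative combination of the $e_l$, any two nonzero vectors in the cone have strictly positive inner product. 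This is cleaner in two respects: it proves the ``all angles acute'' property of the cone in one stroke, with no need to reduce the diameter question to the edges, and it bypasses Fiedler's theorem entirely (the Stieltjes argument is essentially a self-contained proof of that theorem); this realizes the alternative alluded to in Remark~\ref{remark:different_corollary_proof}. Your lower bound $m \ge d+1$ is the same as the paper's: a nontrivial nonnegative relation among outer normals of a $d$-simplex must involve all $d+1$ facets, since any $d$ of the normals are linearly independent; whether one extracts that relation from the reflection law, as you do, or from the ``normals surround the origin'' property of the Bezdek--Bezdek characterization, as the paper does, is immaterial.
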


\begin{remark}
\label{remark:acute_simplex}
A simplex with all acute dihedral angles is commonly called \emph{acute} in the literature; see for example~\cite{brandts2007dissection}. Here we use a different definition for acuteness, but it can be seen that for simplices both definitions coincide. Lemma~\ref{lem:acute_simplex_is_acute} establishes this in one direction, and the opposite direction is obvious.
\end{remark}

More generally, we can prove that under some additional conditions on a shortest generalized billiard trajectory the trajectory turns out to be classical.
%
%

\begin{theorem}
\label{theorem:any_body}
If the shortest closed generalized trajectory in $K \subset \mathbb{R}^d$ has precisely $d+1$ bounces, then it is classical.
\end{theorem}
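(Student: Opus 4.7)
The plan is to prove Theorem~\ref{theorem:any_body} by contradiction. Suppose $P = (q_1, \ldots, q_{d+1})$ is a shortest closed generalized billiard trajectory with exactly $d+1$ bounces, and that some bounce point, say $q_1$, is non-smooth, so that $\dim N_K(q_1) \geq 2$. The goal is to construct a strictly shorter non-translatable closed polygon, contradicting the Bezdek-Bezdek characterization reviewed in Section~\ref{section:bezdeks}. By that characterization, $P$ minimizes perimeter over closed polygons that cannot be translated into $\inte K$. Applying separation to $\bigcap_i \inte(K - q_i) = \emptyset$ yields unit outer normals $n_i \in N_K(q_i) \cap S^{d-1}$ and multipliers $\lambda_i > 0$ with $\sum_{i=1}^{d+1}\lambda_i n_i = 0$; strict positivity holds because any vertex with $\lambda_i = 0$ could be deleted to give a shorter non-translatable polygon (using the remaining certificate), contradicting minimality. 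First-order optimality of $P$ then identifies these $n_i$ with the outward bounce normals and yields the reflection law $v_{i-1} - v_i = c_i n_i$ with $c_i > 0$.

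The next step exploits $\dim N_K(q_1) \geq 2$. Let $e_0, \ldots, e_k$ denote the extreme rays of $N_K(q_1)$ as unit vectors ($k+1 \geq 2$), write $n_1 = \sum_j \alpha_j e_j$ with $\alpha_j \geq 0$, and substitute into $\sum \lambda_i n_i = 0$ to get
\[
\sum_{j=0}^{k}(\lambda_1\alpha_j)\,e_j + \sum_{i=2}^{d+1}\lambda_i n_i = 0,
\]
a positive linear dependence among at least $d+2$ vectors in $\mathbb{R}^d$. By Carath\'eodory's theorem for conical hulls, some subset $S$ of these vectors, with $|S|\leq d+1$, also has $0$ in its positive hull. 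Case (B): $S$ omits some $n_{i_0}$ with $i_0 \geq 2$, or $S$ omits all the $e_j$'s (so $q_1$ itself is effectively dropped). Combining any surviving $e_j$'s at $q_1$ into a single element of $N_K(q_1)$ gives a Farkas certificate for a strict sub-polygon of $P$ with at most $d$ vertices; by the triangle inequality this sub-polygon is strictly shorter than $P$, a contradiction. Case (A): $S = \{e_{j_0}, n_2, \ldots, n_{d+1}\}$ for some $j_0$, so $P$ itself is certified non-translatable with $e_{j_0}$ replacing $n_1$ at $q_1$, and $q_1$ lies on the positive-dimensional facet $\Phi\subset\partial K$ exposed by $e_{j_0}$.

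In Case (A), slide $q_1$ inside $\Phi$ to $q_1' = q_1 + \epsilon t_1$ with $t_1$ in the tangent cone $T_\Phi(q_1)$. Since $e_{j_0} \in N_K(q_1')$, the certificate $\{e_{j_0}, n_2, \ldots, n_{d+1}\}$ still witnesses that $P' = (q_1', q_2, \ldots, q_{d+1})$ is non-translatable, while the perimeter changes by $\delta L = c_1\langle n_1, t_1\rangle\,\epsilon$; choosing $t_1$ with $\langle n_1, t_1\rangle < 0$ yields the required contradiction. The main obstacle is justifying this choice. When $n_1$ is in the relative interior of $N_K(q_1)$, polar duality gives $\{v \in T_K(q_1) : \langle n_1, v\rangle = 0\} = \lin T_K(q_1)$ (the lineality of the tangent cone), while $\Phi$ being a positive-dimensional facet and $q_1$ non-smooth force $T_\Phi(q_1)$ to strictly exceed $\lin T_K(q_1)$; any $t_1 \in T_\Phi(q_1) \setminus \lin T_K(q_1)$ then satisfies $\langle n_1, t_1\rangle < 0$. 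The degenerate sub-case where $n_1$ itself is an extreme ray of $N_K(q_1)$ (making the Carath\'eodory expansion vacuous) is handled by first augmenting the Farkas set with any other extreme ray $e_{j'}\neq n_1$ of $N_K(q_1)$---one exists because $\dim N_K(q_1) \geq 2$---and rerunning the case analysis with $n_1$ now lying in the interior of a $2$-dimensional face of the enlarged set.
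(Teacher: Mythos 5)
Your reduction to the case $0\in\inte\conv\{n_1,\dots,n_{d+1}\}$ and your Case (B) match the paper's first step, but Case (A) --- the heart of the matter --- has a genuine gap. You slide $q_1$ inside the face $\Phi$ of $K$ exposed by the extreme ray $e_{j_0}$ of $N_K(q_1)$, and for this you assert without proof that $\Phi$ is positive-dimensional. This is false in general: take $K\subset\mathbb{R}^2$ to be a lens, the intersection of two round disks whose boundary circles cross at $q_1$. Then $N_K(q_1)$ is a genuine two-dimensional cone, but each of its extreme rays is the outward normal of one of the circles at $q_1$, and the corresponding supporting line meets $K$ only at $q_1$ by strict convexity of the disks; hence $\Phi=\{q_1\}$ and there is no direction $t_1$ to slide along. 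The subsequent claim that $T_\Phi(q_1)$ strictly exceeds $\lin T_K(q_1)$ inherits this problem, and the ``degenerate sub-case'' where $n_1$ is itself an extreme ray is not actually resolved by ``augmenting the Farkas set'': Carath\'eodory applied to the enlarged set may simply return the original certificate, leaving you with nothing to perturb.

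The way out --- and this is what the paper's proof does --- is to notice that the non-translatability certificate (Lemma~\ref{lemma:bezdek}) does not require the vertices to lie on $\partial K$, only on hyperplanes supporting $K$. So you need not stay inside the exposed face: pick any $\tilde\rho\in N_K(q_1)$ not proportional to $n_1$ (it exists since $q_1$ is non-smooth; the paper takes $\tilde\rho$ to be a small rotation of $n_1$ inside $N_K(q_1)$), let $H$ be the supporting hyperplane with outer normal $\tilde\rho$, and move $q_1$ along all of $H$. Since $H$ is not orthogonal to the bisector of $\widehat{q_{d+1}q_1q_2}$, Lemma~\ref{lemma:minimize} gives a strict decrease of length, and the replaced vertex still lies on a supporting hyperplane with normal $\tilde\rho$. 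The price is that the certificate's normal at $q_1$ changes from $n_1$ to $\tilde\rho$, and here the hypothesis of exactly $d+1$ bounces is used in an essential way: it forces $0\in\inte\conv\{n_i\}$, so a sufficiently small rotation of one normal keeps $0$ in the convex hull. Your exact-Carath\'eodory route was an attempt to avoid this perturbation argument, but it trades it for the unprovable positive-dimensionality of $\Phi$; as written the proof does not go through.
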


In the last section of the paper we prove generalization of this theorem for the normed space.

\medskip
{\bf Acknowledgments.}
The authors thank Roman Karasev and the unknown referee for their numerous remarks improving the presentation and the language of the paper.

\section{Bezdeks' trajectories}
\label{section:bezdeks}

Let us recall the powerful approach to closed billiard trajectories from~\cite{bezdek2009shortest}. There the problem of finding the length, denoted here by $\xi(K)$, of the shortest closed generalized trajectory in $K$ was restated in terms of minimizing another functional that has a minimum from the compactness considerations.

Let $\ell(Q)$ be the (Euclidean) length of the closed polygonal line $Q$. Using the same notation as in~\cite{akopyan2014elementary} we put
$$
\mathcal P_m(K) = \{(q_1,\ldots, q_m) : \{q_1,\ldots, q_m\} \ \text{doesn't fit into}\ (\inte K + t)\ \text{with} \ t\in \mathbb{R}^d \} =
$$
$$
= \{(q_1,\ldots, q_m) : \{q_1,\ldots, q_m\} \ \text{doesn't fit into}\ (\alpha K+t)\ \text{with}\ \alpha\in (0,1),\ t\in \mathbb{R}^d \}.
$$

Our main tool is:

\begin{theorem}[Theorem 1.1 in~\cite{bezdek2009shortest}]
\label{thm:bezdeks}
For any convex body $K \subset \mathbb{R}^d$ an equality holds:
$$
\xi(K) = \min_{m\ge 2} \min_{Q\in \mathcal P_m(K)} \ell(Q),
$$
and furthermore, the minimum is attained at $m\le d + 1$.
\end{theorem}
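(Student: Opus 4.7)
The plan is to prove the equality as two opposite inequalities, plus a Helly-type bound on the number of vertices.

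For $\xi(K)\ge \min_m\min_{Q\in\mathcal{P}_m(K)}\ell(Q)$: take a shortest closed generalized billiard trajectory $\gamma=(q_1,\ldots,q_m)$, denote by $u_i$ the unit velocity along the edge from $q_{i-1}$ to $q_i$, and verify $\gamma\in\mathcal{P}_m(K)$. The generalized reflection law at $q_i$ reads $u_{i+1}-u_i=\lambda_i n_i$ with $\lambda_i\ge 0$ and $n_i$ an outer unit normal to $K$ at $q_i$. Telescoping along the closed trajectory gives $\sum_i\lambda_i n_i=0$, so $0\in\conv\{n_i\}$. If the vertex set fit inside some $\inte K+t$, then $q_i-t\in\inte K$ together with $\langle q_i,n_i\rangle=h_K(n_i)$ would force $\langle t,n_i\rangle>0$ for every $i$; pairing with $t$ in the identity $\sum_i\lambda_i n_i=0$ yields a contradiction.

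For $\xi(K)\le\min_m\min_{Q\in\mathcal{P}_m(K)}\ell(Q)$: a compactness argument (modulo translations) produces a minimizer $Q^*=(q_1^*,\ldots,q_m^*)$ of $\ell$ on $\mathcal{P}_m(K)$, and the task is to show that $Q^*$ is itself a generalized billiard trajectory. Variationally, each $q_i^*$ must lie on $\partial K$, since an interior vertex could be moved to decrease $\ell$ without destroying confinement; then the first-order condition on admissible tangential variations at $q_i^*\in\partial K$ forces $u_{i+1}^*-u_i^*$ to lie in the outer normal cone of $K$ at $q_i^*$ with a non-negative multiplier, which is precisely the generalized reflection law. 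The bound $m\le d+1$ follows by Helly's theorem: confinement is equivalent to $\bigcap_i(q_i-\inte K)=\emptyset$, so some subfamily of at most $d+1$ translates already has empty intersection, and the sub-polygon on the corresponding vertices has length no larger than $Q^*$ (by the triangle inequality) and still lies in $\mathcal{P}_{m'}(K)$.

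The main obstacle is the variational argument in the upper-bound direction. At non-smooth boundary points of $K$ the tangent plane is replaced by a tangent cone, so one needs a careful analysis of the admissible variations preserving $Q\in\mathcal{P}_m(K)$ to ensure they span enough directions to produce the full reflection law rather than a weaker stationarity condition. This is essentially carried out via Lagrange multipliers for the implicit constraint that the origin lies in the convex hull of outer normal directions at the vertices.
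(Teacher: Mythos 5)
First, a point of reference: the paper does not prove this statement at all --- it is imported verbatim as Theorem~1.1 of Bezdek--Bezdek \cite{bezdek2009shortest} and used as a black box (the only ingredient of its proof that the paper reproduces is Lemma~\ref{lemma:bezdek}). So there is no in-paper proof to compare against; your sketch has to stand on its own, and it does follow the general strategy of the cited proof. Your lower-bound direction is correct and complete in outline: telescoping the reflection law gives $0\in\conv\{n_i\}$, and the pairing argument with the translation vector $t$ is exactly the proof of Lemma~\ref{lemma:bezdek}. The Helly step for $m\le d+1$ is also right: non-translatability is $\bigcap_i(q_i-\inte K)=\emptyset$, Helly extracts at most $d+1$ indices with the same property, and dropping vertices only shortens the closed polygon.

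The genuine gap is in the upper-bound direction, and it sits exactly where you flag ``the main obstacle,'' but it is worse than a technicality. Your justification that each $q_i^*$ lies on $\partial K$ --- ``an interior vertex could be moved to decrease $\ell$ without destroying confinement'' --- does not work as stated, because the confinement constraint couples all the vertices. With the other vertices fixed, the admissible positions for $q_i$ form the complement of the open convex set $\bigl(\bigcap_{j\ne i}(q_j-\inte K)\bigr)+\inte K$, and near $q_i^*$ the boundary of that set is in general \emph{not} a translate of $\partial K$; moreover the whole configuration is only defined up to translation, so ``interior vertex'' is not even well defined before one chooses the right translate. A first-order Lagrange-multiplier analysis of the raw constraint ``$0$ lies in the convex hull of outer normals'' is likewise circular, since that description of $\mathcal P_m(K)$ is only the easy implication (Lemma~\ref{lemma:bezdek}); what the minimization step actually needs is the \emph{converse}: any length-minimal $Q^*\in\mathcal P_m(K)$ admits a translate all of whose vertices lie on $\partial K$ and carry support hyperplanes whose outer normals surround the origin. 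That converse characterization is the real content of Bezdek--Bezdek's Lemma~2.2 (obtained from the empty intersection $\bigcap_i(q_i-\inte K)=\emptyset$ by a separation/duality argument), and only after it is established does the tangential variation at each boundary vertex yield the generalized reflection law. As written, your sketch assumes the conclusion of that lemma rather than proving it.
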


\begin{remark}
\label{remark:bezdeks}
Here we need to make an important remark. Suppose a polygonal line $Q$ (that cannot be translated into $\inte K$) has more than $d+1$ vertices and it has no \emph{fake} vertices, that is, no coinciding consecutive vertices and no two consecutive segments in the same direction. Then from the proof in~\cite{bezdek2009shortest} it follows that $\ell(Q)>\xi(K)$.
\end{remark}

\section{Sufficient conditions in the Euclidean case}
\label{section:euclidean}


%
%

\begin{lemma}[Particular case of Lemma 2.2 in~\cite{bezdek2009shortest}]
\label{lemma:bezdek}
Suppose the points $q_1, \ldots, q_m$ satisfy the following condition: There exist affine halfspaces $H_1^+, \ldots, H_m^+$ with outer normals $n_1, \ldots, n_m$, such that
\begin{enumerate}
\item
$q_i \in \partial H_i^+$ for $i=1,\ldots,m$;
\item
$K \subset H_i^+$ for $i=1,\ldots,m$;
\item
$0 \in \conv\{n_1, \ldots, n_m\}$.
\end{enumerate}
Then the polygonal line with vertices $q_1, \ldots, q_m$ (and maybe with some other vertices) cannot be translated into $\inte K$.
\end{lemma}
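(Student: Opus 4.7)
The statement is a strict separation argument disguised as a lemma about fitting a polygon into a body. The plan is to argue by contradiction: assume there is a translation vector $t \in \mathbb{R}^d$ such that $q_i + t \in \inte K$ for every $i = 1, \ldots, m$ (this would follow, by convexity of $K$, from any translation of the entire polygonal line into $\inte K$), and then produce a linear functional identity that forces $0 < 0$.

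First, I would unpack the hypotheses. Since $q_i \in \partial H_i^+$ and $n_i$ is the outer normal of $H_i^+$, the halfspace $H_i^+$ is the set of $x$ with $\langle n_i, x - q_i\rangle \leq 0$. Combined with $K \subset H_i^+$, any $y \in \inte K$ satisfies $\langle n_i, y - q_i \rangle < 0$ strictly (otherwise $y$ would lie on the supporting hyperplane $\partial H_i^+$, hence on the boundary of $H_i^+$ and so not in the interior of $K$, contradicting $\inte K \subset \inte H_i^+$ in a neighborhood of $y$; a short separation argument makes this precise). Applying this to $y = q_i + t \in \inte K$ gives
\[
\langle n_i, t \rangle < 0 \qquad \text{for every } i = 1, \ldots, m.
\]

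Next, I would invoke hypothesis (3) to write $0 = \sum_{i=1}^m \lambda_i n_i$ with $\lambda_i \geq 0$ and $\sum_i \lambda_i = 1$. Pairing with $t$ yields
\[
0 = \langle 0, t \rangle = \sum_{i=1}^m \lambda_i \langle n_i, t \rangle.
\]
Since at least one $\lambda_i$ is positive and the corresponding $\langle n_i, t \rangle$ is strictly negative (while all other terms are nonpositive), the right-hand side is strictly negative, a contradiction.

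\textbf{Main obstacle.} There is essentially no obstacle; the only thing to be careful about is the strictness of the inequality, which comes from the distinction between $K \subset H_i^+$ and $\inte K \subset \inte H_i^+$. The translation of the polygonal line into $\inte K$ is equivalent to the translation of the vertex set into $\inte K$ by convexity of $K$ and of $\inte K$, so no separate argument is needed to pass from the polygonal line to its vertices.
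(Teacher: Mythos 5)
Your proof is correct and complete; the paper itself does not reproduce an argument for this lemma (it simply defers to Bezdek--Bezdek), and your strict-separation argument is exactly the standard one behind that reference. The only point worth making explicit is that each $n_i \neq 0$ (true for any affine halfspace), which is what guarantees the strict inequality $\langle n_i, t\rangle < 0$ for $q_i + t \in \inte K$ and hence the contradiction $0 < 0$.
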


\begin{proof}
Can be found in~\cite{bezdek2009shortest}.
\end{proof}

\begin{lemma}
\label{lemma:returns}
Suppose a generalized billiard trajectory in $K\subset\mathbb R^d$ with three or more bounces has \emph{a point of return}, that is, a part $q_{i-1} \rightarrow q_i \rightarrow q_{i+1}$ such that $q_{i-1} = q_{i+1}$. Then it cannot be the shortest generalized trajectory.
\end{lemma}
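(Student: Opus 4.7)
The plan is to argue by contradiction. Suppose $Q=(q_1,\ldots,q_m)$ with $m\ge 3$ is a shortest closed generalized billiard trajectory---so $\ell(Q)=\xi(K)$ by Theorem~\ref{thm:bezdeks}---and that it has a return $q_{i-1}=q_{i+1}$. Since the incoming and outgoing unit velocities at $q_i$ are opposite, the reflection law forces $q_i-q_{i-1}$ to be parallel to some outer normal $n_i$ of $K$ at $q_i$; in particular $|q_i-q_{i-1}|>0$. The natural shortening is $Q'=(q_1,\ldots,q_{i-1},q_{i+2},\ldots,q_m)$, with $m-2$ vertices and length $\ell(Q')=\ell(Q)-2|q_i-q_{i-1}|<\ell(Q)$. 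It would suffice to show $Q'\in\mathcal{P}_{m-2}(K)$, since then $\xi(K)\le\ell(Q')<\ell(Q)=\xi(K)$ is a contradiction.

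Suppose to the contrary that the vertex set $S'=\{q_1,\ldots,q_{i-1},q_{i+2},\ldots,q_m\}$ can be translated into $\inte K$ by some vector $t$. I would then split into three sub-cases by the location of $q_i+t$. If $q_i+t\in\inte K$, the whole set $\{q_1,\ldots,q_m\}+t$ lies in $\inte K$, contradicting $Q\in\mathcal{P}_m(K)$. If $q_i+t\in\partial K$, pick $v$ in the nonempty interior of the tangent cone $T_K(q_i+t)$; then for small $\epsilon>0$ the perturbed translation $t+\epsilon v$ brings $q_i+t+\epsilon v$ into $\inte K$ while keeping $q_j+t+\epsilon v\in\inte K$ for $j\neq i$ by openness, again a contradiction.

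The delicate case is $q_i+t\notin K$. Running the two previous arguments pointwise in the open convex set $T_0:=\bigcap_{j\neq i}(\inte K-q_j)$ rules out those sub-cases for every $t\in T_0$, so $T_0$ and the closed convex set $K-q_i$ are disjoint. A separating hyperplane produces a direction $w$ with $\sup_{T_0}\langle\cdot,w\rangle\le\min_K\langle\cdot,w\rangle-\langle q_i,w\rangle$; combined with the a priori bound $\sup_{T_0}\langle\cdot,w\rangle\le\max_K\langle\cdot,w\rangle-\max_{j\neq i}\langle q_j,w\rangle$ coming from the definition of $T_0$, and the inclusions $q_i,q_j\in K$, an optimal choice of the separating direction should force $\langle q_i,w\rangle=\min_K\langle\cdot,w\rangle$ (so that $-w$ is an outer normal at $q_i$) and $\max_K\langle\cdot,w\rangle$ to be attained at some $q_{j_0}\neq q_i$ (so that $w$ is an outer normal at $q_{j_0}$). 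Applying Lemma~\ref{lemma:bezdek} to the two-point polygonal line $\tilde Q=(q_i,q_{j_0})$ with normals $-w,w$ (whose convex hull contains $0$) shows $\tilde Q\in\mathcal{P}_2(K)$, and the triangle inequality on the two arcs of $Q$ joining $q_i$ and $q_{j_0}$ yields $\ell(\tilde Q)=2|q_i-q_{j_0}|\le\ell(Q)$. Equality occurs only when all vertices of $Q$ lie on the segment $[q_i,q_{j_0}]$, a degenerate situation in which the return creates fake vertices in $Q$ whose removal (via Remark~\ref{remark:bezdeks}) strictly shortens $Q$. In either case the minimality of $Q$ is contradicted.

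The main obstacle I foresee is the third sub-case: extracting, from the mere existence of a separating hyperplane between the open convex set $T_0$ and the closed convex set $K-q_i$, the strong geometric conclusion that $q_i$ and some other vertex $q_{j_0}$ are antipodal extrema of $K$ in the separating direction. Handling the degenerate collinear configuration where all vertices of $Q$ line up on a single segment will also require invoking the fake-vertex observation of Remark~\ref{remark:bezdeks}.
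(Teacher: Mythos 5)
There is a genuine gap, and it stems from deleting the wrong vertex. You remove $q_i$ and $q_{i+1}$, so the surviving vertex set $S'$ no longer contains the apex $q_i$ of the return; consequently the key claim $Q'\in\mathcal{P}_{m-2}(K)$ is not automatic and is in fact false in general ($q_i$ may be an essential point of contact, e.g.\ for $m=3$ your $Q'$ is a single point, which always translates into $\inte K$). Everything then hinges on your third sub-case, and that sub-case is not actually proved: from the disjointness of $T_0$ and $K-q_i$ you only get \emph{some} separating functional $w$ with $\sup_{T_0}\langle\cdot,w\rangle\le\min_K\langle\cdot,w\rangle-\langle q_i,w\rangle\le 0$; nothing in the sketch forces the equality $\langle q_i,w\rangle=\min_K\langle\cdot,w\rangle$ (i.e.\ $-w\in N_K(q_i)$), nor that $\max_K\langle\cdot,w\rangle$ is attained at another vertex $q_{j_0}$. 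The phrase ``an optimal choice of the separating direction should force'' is precisely the missing argument, as you yourself acknowledge, so the contradiction is never reached.

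The intended proof is a one-liner that avoids all of this: delete $q_{i-1}$ (one of the two \emph{coinciding} endpoints) rather than the apex. Since $q_{i+1}=q_{i-1}$ stays in the polygon, the set of vertices is literally unchanged, so membership in $\mathcal{P}_{m-1}(K)$ is immediate — no separation argument, no case analysis. The length strictly decreases because $|q_{i-2}-q_{i-1}|+|q_{i-1}-q_i|>|q_{i-2}-q_i|$, the inequality being strict since the reflection at $q_{i-1}$ presupposes $q_{i-1}\notin[q_{i-2},q_i]$. This contradicts Theorem~\ref{thm:bezdeks} directly. I recommend discarding the $T_0$ machinery and switching to this choice of deleted vertex.
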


\begin{proof}
Suppose it is the shortest. We note that dropping the point $q_{i-1}$ from the trajectory, we obtain the polygonal line whose length is strictly shorter than it was before and which still cannot be translated into $\inte K$, since it has the same set of vertices. This contradicts Theorem~\ref{thm:bezdeks}.
\end{proof}

We denote by $N_K(q)$ the cone of outer normals and by $T_K(q) = N_K^\circ(q)$ the tangent cone for a point $q \in \partial K$, the latter was already used in the definition of acuteness. We assume that both these cones have the vertices at the origin. If $q$ is non-smooth point of $\partial K$ then $N_K(q)$ is non-trivial (not a single ray).

The proof of Theorem~\ref{thm:acute} will follow from its slightly more general form:

\begin{theorem}
\label{thm:acuteweak}
Suppose that for all non-smooth $q\in\partial K$ and each ray $\rho \subset N_K(q)$ there exists a section $N_K(q)\cap \tau$ by a two-dimensional plane $\tau \supset \rho,$ that contains an angle of measure $>\frac\pi 2$. Then the shortest closed generalized trajectory in $K \subset \mathbb{R}^d$ is classical.
\end{theorem}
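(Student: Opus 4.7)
The plan is to argue by contradiction. Suppose the shortest closed generalized trajectory $Q^* = (q_1,\dots,q_m)$ has a non-smooth bounce $q_1$. The generalized reflection law supplies an outer normal $n\in N_K(q_1)$ proportional to $-(\hat u+\hat v)$, where $\hat u,\hat v$ are the unit vectors from $q_1$ to its neighbors $q_m,q_2$. Applying the hypothesis to the ray $\rho=\mathbb R_+ n$ yields a $2$-plane $\tau\ni n$ on which the wedge $W := N_K(q_1)\cap\tau$ has angular measure exceeding $\pi/2$. I would then pick rays $m^+, m^- \in W$ with $\angle(m^+,m^-)>\pi/2$ (so $m^+\cdot m^-<0$) whose planar convex cone contains $n$, writing $n = \alpha^+m^+ + \alpha^-m^-$ with $\alpha^\pm\ge 0$.

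Next I would promote the minimality of $Q^*$ to a Bezdek certificate: outer normals $n_i\in N_K(q_i)$ with $n_1 = n$ and nonnegative $\lambda_i$ (not all zero) satisfying $\sum\lambda_i n_i = 0$. Substituting $n = \alpha^+m^+ + \alpha^-m^-$ in this relation immediately gives $0\in\conv\{m^+,m^-,n_2,\dots,n_m\}$. Now construct a new polygon $Q'$ by replacing $q_1$ with two close vertices $q_1^\pm = q_1 + \epsilon v^\pm$ for small $\epsilon>0$, choosing $v^\pm$ so that $v^\pm\cdot m^\pm = 0$; then $q_1^\pm$ lies on the support hyperplane of $K$ with outer normal $m^\pm$. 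Lemma~\ref{lemma:bezdek} applied with witness $(m^+,m^-,n_2,\dots,n_m)$ places $Q'\in\mathcal{P}_{m+1}(K)$, so Theorem~\ref{thm:bezdeks} gives $\ell(Q') \ge \xi(K) = \ell(Q^*)$.

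To reach a contradiction I need $v^\pm$ making $\ell(Q')<\ell(Q^*)$. Expanding to first order,
\[
\ell(Q') - \ell(Q^*) = \epsilon\bigl(-\hat u\cdot v^+ - \hat v\cdot v^- + |v^+ - v^-|\bigr) + O(\epsilon^2),
\]
so I must arrange $\hat u\cdot v^+ + \hat v\cdot v^- > |v^+-v^-|$. My plan is to take $v^\pm$ in $\tau$ along the two extreme rays of the polar wedge $W^\circ\subset\tau$ (hence antiparallel to $m^\mp$ and lying in $T_K(q_1)\cap\tau$). Because $\angle(m^+,m^-)>\pi/2$, the polar wedge $W^\circ$ is \emph{acute} (measure $<\pi/2$), which is exactly the condition that makes the ``short-cut'' segment $|v^+-v^-|$ strictly smaller than the combined first-order length savings on the two outer segments.

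The main obstacle is verifying this first-order inequality uniformly in the trajectory angle $\beta$ between $\hat u$ and $-n$, and over the possible projections of $\hat u,\hat v$ onto $\tau$. Direct trigonometry reduces the inequality to a relation between $\cos\tfrac{\alpha_++\alpha_-}{2}$ (controlling $|v^+-v^-|$) and $\sin\bigl(\tfrac{\alpha_++\alpha_-}{2}-\beta\bigr)$ (controlling the length savings), where $\alpha_\pm = \angle(n,m^\pm)$; a single choice of $m^\pm$ may fail for awkward $\beta$, but the freedom to pick $m^+,m^-$ anywhere in $W$ (any pair whose angle exceeds $\pi/2$ and whose cone contains $n$), together with exploiting \emph{both} orientations of $w = v^+-v^-$, should cover every configuration permitted by the $>\pi/2$ hypothesis. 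This mirrors the intuition that a generalized bounce at an acute corner can always be ``unfolded'' into two nearby classical-type bounces on the incident faces, shortening the trajectory.
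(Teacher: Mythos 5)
Your overall strategy---split the corner bounce into two nearby bounces on support hyperplanes whose normals $m^+,m^-$ span an angle $>\pi/2$, preserve the Bezdek certificate via $n=\alpha^+m^++\alpha^-m^-$, and win on length---is sound in outline, and the bookkeeping with Lemma~\ref{lemma:bezdek} and Theorem~\ref{thm:bezdeks} is correct. But the step you yourself label ``the main obstacle'' is a genuine gap, and the route you sketch for closing it fails. The inequality $\hat u\cdot v^+ + \hat v\cdot v^- > |v^+-v^-|$ cannot, in general, be achieved with $v^\pm$ lying in the plane $\tau$. The directions $\hat u,\hat v$ live in $\mathbb{R}^d$; if $v^\pm\in\tau$ then only their projections onto $\tau$ contribute to the left-hand side, and these projections can be short. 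Concretely, take $d=3$, let $N_K(q_1)$ be a planar wedge inside $\tau$ of angle $\beta=\pi/2+\epsilon$, and let the trajectory reflect off the corresponding edge of $K$ while travelling almost parallel to it, so that $\hat u=-\cos\theta\,n+\sin\theta\,e$ and $\hat v=-\cos\theta\,n-\sin\theta\,e$ with $e\perp\tau$. Writing $c_\pm=\cot\angle(n,m^\pm)$, a short computation shows that a planar choice of $v^\pm$ exists only if $\sin^2\theta<4(1-c_+c_-)/\bigl(4+(c_+-c_-)^2\bigr)$; the hypothesis only guarantees $c_+c_-<1$, so for $\theta$ bounded away from $0$ and $\epsilon$ small no choice of $m^\pm$ in $W$, of $t^\pm$, or of the orientation of $w$ rescues the inequality. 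The inequality is in fact achievable, but only with $v^\pm$ having components orthogonal to $\tau$ (in this example $v^+-v^-$ must be tilted out of $\tau$ by roughly the angle $\theta$), so the reduction to two-dimensional trigonometry in $\tau$ that you propose is not valid, and the case you defer is exactly where the difficulty sits.

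For comparison, the paper's proof sidesteps the first-order expansion entirely by an exact unfolding: reflect $a$ in $H_1$ and $b$ in $H_2$ to obtain $a',b'$; the condition $\angle(n_1,n_2)>\pi/2$ guarantees $\widehat{a'qb'}<\pi$, so the segment $[a',b']$ avoids $q$, meets $H_1$ and $H_2$ in points $q_1,q_2$, and gives $|aq_1|+|q_1q_2|+|q_2b|=|a'b'|<|a'q|+|qb'|=|aq|+|qb|$, after which the certificate argument is the same as yours. The new vertices $q_1,q_2$ produced this way automatically leave $\tau$ in exactly the configurations where your planar perturbations fail. If you wish to keep the variational formulation, take $v^\pm$ to be the linearization of $q_1-q$ and $q_2-q$ from that construction rather than vectors in $\tau$; but at that point you are re-deriving the reflection identity, and the exact argument is both shorter and already complete.
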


\begin{proof}
Assume that the shortest generalized closed trajectory passes through a non-smooth point $q \in \partial K$. The normal cone $N_K(q)$ satisfies the assumption in the theorem.

Let $a \rightarrow q \rightarrow b$ be the part of the trajectory and $n$ be the outer normal at $q$ opposite to the bisector of $\widehat{aqb}$.
Find the plane $\tau$ (containing the ray emanating from $q$ with direction $n$) that cuts from from $q + N_K(q)$ an angle of measure $>\frac\pi 2$.

Denote the vectors of the sides of the angle $\tau \cap q + N_K(q)$ by $n_1$ and $n_2$. Without loss of generality we may assume that $a$ and $n_1$ lie on one side with respect to $n$ and $b$ and $n_2$ lie on the other side.
Denote by $H_1$ and $H_2$ the support hyperplanes at $q$ orthogonal to $n_1$ and $n_2$.
Reflect $a$ and $b$ in hyperplanes $H_1$ and $H_2$ respectively and obtain point $a'$ and $b'$ (see Figure~\ref{fig:reflected points}).

Note that the angle $\widehat{a'qb'}$ is less than $\pi$, since the points $a'$ and $b'$ lie in the open halfspace bounded by the hyperplane through $q$, whose normal is the reflection of $n$ in the bisector hyperplane of $H_1$ and $H_2$.
Let $q_1$ and $q_2$ be the points of intersection of the segment $[a',b']$ with $H_1$ and $H_2$.
Then
\begin{equation*}
	|aq_1|+|q_1q_2|+|q_2b|=|a'q_1|+|q_1q_2|+|q_2b'|=|a'b'|<|a'q|+|qb'|=|aq|+|qb|.
\end{equation*}

Thus if we replace $a \rightarrow q \rightarrow b$ with $a \rightarrow q_1 \rightarrow q_2 \rightarrow b$ then the trajectory becomes shorter, but the normals at the vertices of the trajectory still surround the origin, because $n$ is positive combination of $n_1$ and $n_2$. This certifies that the new trajectory still cannot be translated into $\inte K$.

\begin{figure}[h]
\centering
\includegraphics{cutefigures-3.mps}
\caption \,
\label{fig:reflected points}
\end{figure}

\end{proof}

Let us show that Theorem~\ref{thm:acuteweak} is indeed more general than Theorem~\ref{thm:acute}.
\begin{lemma}
	\label{lem:acuteweak is not weak}
	Acute bodies satisfy the assumption of Theorem~\ref{thm:acuteweak}.
\end{lemma}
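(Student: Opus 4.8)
The plan is to recast the assumption of Theorem~\ref{thm:acuteweak} as a statement about the polar of the acute factor $T^k$, and then to construct the required plane explicitly. First I would read off the normal cone from the product structure: if $T_K(q)=F\times T^k$ with $F=(\lin T^k)^{\perp}$, then polarity turns the orthogonal product into the product of polars, and since the polar of a linear subspace is trivial, $N_K(q)=(T^k)^{\circ}$, the polar computed inside $F^{\perp}=\lin T^k\cong\mathbb R^k$. A plane meeting $N_K(q)$ in more than a ray must therefore lie in $F^\perp$, so it suffices to prove the following about the acute cone $T:=T^k$ (with $k\ge 2$, since otherwise $q$ is smooth) and its polar $N:=T^{\circ}$: for every ray with unit direction $w\in N$ there is a $2$-plane $\tau\ni w$ such that $N\cap\tau$ is a planar sector of angle $>\pi/2$. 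Such a sector automatically contains $w$ and supplies the obtuse angle demanded by the assumption.

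I would then split according to whether $-w\in T$. If $-w\notin T=N^{\circ}$, the definition of polarity yields some $\eta\in N$ with $\langle -w,\eta\rangle>0$, i.e.\ $\langle w,\eta\rangle<0$; as $T$ is full-dimensional, $N$ is pointed, so $w,\eta$ are independent and $\tau=\lin\{w,\eta\}$ is a genuine plane. The cone spanned by $w$ and $\eta$ lies in $N\cap\tau$ and already subtends an angle $>\pi/2$, finishing this case. The substantive case is $-w\in T$. Here acuteness of $T$ forces $\langle -w,u\rangle>0$ for every $u\in T\setminus\{0\}$, hence $\langle w,u\rangle<0$ there, so $w\in\inte N$ while $-w$ is an ``interior direction'' of $T$. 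Now I would invoke the elementary planar polarity identity $N\cap\tau=(\mathrm{pr}_\tau T)^{\circ}$ (polar taken inside $\tau$, where $\mathrm{pr}_\tau$ is the orthogonal projection onto $\tau$): in two dimensions the polar of a sector of angle $\phi$ has angle $\pi-\phi$, so it is enough to exhibit a plane $\tau\ni w$ on which $T$ projects to a sector of angle strictly less than $\pi/2$.

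To build that plane, normalise $|w|=1$ and take the compact convex base $B=\{u\in T:\langle w,u\rangle=-1\}$. Writing its points as $u=-w+b$ with $b\in w^{\perp}$ identifies $B$ with a convex body $\hat B\subset w^{\perp}$ containing the origin (because $-w\in B$), and acuteness becomes $\langle u,u'\rangle=1+\langle b,b'\rangle>0$, i.e.\ $-\langle b,b'\rangle<1$ for all $b,b'\in\hat B$. Choose $b^{*}\in\hat B$ farthest from the origin, and set $v=b^{*}/|b^{*}|\in w^{\perp}$, $\tau=\lin\{w,v\}$. Then $\mathrm{pr}_\tau T$ is the cone over the segment $\{-w+tv:\ t\in[m,M]\}$, $m=\min_{b}\langle b,v\rangle\le 0\le M=\max_b\langle b,v\rangle$, whose extreme rays $r_{+}=-w+Mv$ and $r_{-}=-w+mv$ satisfy $\langle r_-,r_+\rangle=1+mM$. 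Since $b^{*}$ is farthest, $M=|b^{*}|$, and if $b_-$ attains $m$ then $mM=\langle b_-,v\rangle\,|b^{*}|=\langle b_-,b^{*}\rangle>-1$ by the acuteness bound. Hence $\langle r_-,r_+\rangle>0$, the projected sector is acute, and $N\cap\tau$ opens to an angle $>\pi/2$ through the interior point $w$.

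I expect the case $-w\in T$ to be the only real obstacle. The tempting shortcut of Case~1 — find a single $\eta\in N$ with $\langle w,\eta\rangle<0$ and use $\lin\{w,\eta\}$ — fails precisely here, for $-w\in T$ means no such $\eta$ exists and one must instead flank $w$ by two distinct rays of $N$ spanning an obtuse sector, which is what the farthest-point choice of $v$ accomplishes. The estimate is moreover genuinely tight: for the cone over a regular triangle one computes $-\langle b_-,b^{*}\rangle=\tfrac12\tan^2\gamma$, which degenerates to the equality $1$ (whence $\langle r_-,r_+\rangle=0$) exactly when the half-angle $\gamma$ reaches $\arctan\sqrt2$, the threshold at which $T$ ceases to be acute. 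Thus the argument must use the strict acuteness inequality in the sharp form $-\langle b_-,b^{*}\rangle<1$ and cannot be softened to any width- or inradius-type estimate.
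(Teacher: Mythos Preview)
Your proof is correct and takes a genuinely different route from the paper's. The paper argues directly with angles: fixing a point $p$ in the direction $-w$ opposite to $\rho$, it uses the angular diameter $\varphi$ of $T^{k}$ together with the supremum angle $\psi=\sup_{b}\widehat{pqb}$ over $b\in K\cap(q+L)$ to place two explicit support hyperplanes $H_1,H_2$ in a single plane through $q$, and then reads off the obtuse angle between their normals from the inequality $\varphi-\varepsilon<\tfrac{\pi}{2}$. You instead pass to the dual picture via the identity $N\cap\tau=(\mathrm{pr}_\tau T)^{\circ}$ and reduce to exhibiting a plane on which $T$ projects to an acute sector; the farthest-point choice of $b^{*}$ in the compact base $B$ lets the acuteness inequality $\langle b,b'\rangle>-1$ do all the work in one line. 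Your case split according to whether $-w\in T$ isolates the trivial situation cleanly (the paper's single construction implicitly handles both cases at once), and your tightness remark for the cone over a regular triangle is a nice addition the paper does not make. In short, the paper's argument is more pictorial and unified, while yours is more algebraic and makes the precise role of the \emph{strict} acuteness inequality transparent.
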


\begin{proof}
Suppose a non-smooth point $q \in \partial K$ satisfies the acuteness condition.
Let $T_K(q) = F \times T^{k}$ be the orthogonal decomposition from the acuteness definition, where $T^k$ is $k$-dimensional acute cone, whose diameter equals $\varphi < \frac \pi 2$. Then the cone of outer normals $N_K(q) = T_K^\circ(q)$ is $k$-dimensional. 
Denote by $L$ the $k$-dimensional linear hull of $N_K(q)$.

Consider the ray $\rho \subset N_K(q)$ from the origin and denote by $p$ an arbitrary point (different from $q$) on the ray from $q$ in the direction opposite to $\rho$.
Let $\psi = \sup\limits_{b \in K \cap (q+L)} \widehat{pqb}$. From the acuteness $\psi < \frac \pi 2$. Let $b$ be such a point that $\widehat{pqb} = \psi - \varepsilon$ for $0 < \varepsilon < \frac \pi 2 - \varphi$.

\begin{figure}[h]
\centering
\includegraphics{cutefigures-2.mps}
\caption \,
\label{pic2}
\end{figure}

Now we work in the two-dimensional plane $\tau$ through the points $p,q,b$. In this plane we draw the line $\ell_1$ through $q$ forming angle $\psi$ with $(qp)$, and line $\ell_2$ through $q$ forming angle $\varphi$ with $(qb)$ (see Figure~\ref{pic2}). Note that the hyperplanes $H_1$ and $H_2$, passing through $\ell_1, \ell_2$ and orthogonal to $\tau$, are support hyperplanes for $K$ ($H_1$ is such because of the definition of $\psi$, and $H_2$ is such because of the acuteness condition). This is exactly the construction needed in Theorem~\ref{thm:acuteweak}: The section of $N_K(q)$ by $\tau - q \supset \rho$ contains an angle of measure $>\frac\pi 2$.
\end{proof}

\begin{proof}[Proof of Corollary~\ref{corollary:simplex}]
In Lemma~\ref{lem:acute_simplex_is_acute} below we show that such simplex is indeed acute in the sense of our definition of acuteness. Now consider the shortest generalized trajectory and show that it is classical and has $d+1$ bounces. The first conclusion follows from Theorem~\ref{thm:acute}. The second conclusion follows form the observation that the outer normals to some $d$ facets of the simplex cannot surround the origin.
\end{proof}

\begin{lemma}
\label{lem:acute_simplex_is_acute}
A simplex with all acute dihedral angles satisfies the acuteness condition.
\end{lemma}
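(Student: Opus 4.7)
The plan is as follows. Let $q\in\partial\Delta$ be a non-smooth point; then $q$ lies in the relative interior of some face $G$ of codimension $k\geq 2$. Write $v_{(1)},\dots,v_{(k)}$ for the vertices of $\Delta$ not in $G$, and $n_{(1)},\dots,n_{(k)}$ for the outward unit normals to the corresponding opposite facets $F_{(1)},\dots,F_{(k)}$, which are precisely the facets of $\Delta$ containing $G$. Setting $F:=\lin(G-q)$, each $n_{(a)}$ lies in $F^\perp$ because $F_{(a)}\supset G$; moreover the $n_{(a)}$ are linearly independent in the $k$-dimensional space $F^\perp$ (since the $d+1$ facet normals of $\Delta$ admit only a single positive linear relation $\sum c_i n_i=0$, so any proper subset is independent). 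Hence $N_\Delta(q)$ is a full-dimensional simplicial cone in $F^\perp$, and the tangent cone decomposes orthogonally as $T_\Delta(q)=F\oplus T^k$, where $T^k\subset F^\perp$ is the polar of $N_\Delta(q)$ inside $F^\perp$.

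The polar of the simplicial cone generated by $n_{(1)},\dots,n_{(k)}$ is itself simplicial, generated by the negatives of the dual-basis vectors: if $n_{(a)}^*\in F^\perp$ is defined by $n_{(a)}^*\cdot n_{(b)}=\delta_{ab}$, then $T^k$ is the conic hull of $-n_{(1)}^*,\dots,-n_{(k)}^*$. So $T^k$ is acute exactly when $n_{(a)}^*\cdot n_{(b)}^*>0$ for every pair $a,b$, i.e., when the Gram matrix $(n_{(a)}^*\cdot n_{(b)}^*)$—which is the inverse of the Gram matrix $M:=(n_{(a)}\cdot n_{(b)})$—has strictly positive entries.

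The acute-dihedral-angle hypothesis translates directly into a Stieltjes sign pattern on $M$: the diagonal entries of $M$ are $|n_{(a)}|^2=1$, and for $a\neq b$ the off-diagonal entry $n_{(a)}\cdot n_{(b)}$ is strictly negative because the dihedral angle of $\Delta$ at the ridge $F_{(a)}\cap F_{(b)}$ is acute. Thus $M$ is a symmetric positive definite matrix with strictly negative off-diagonals, and by the standard M-matrix inverse-positivity theorem (provable concretely from the splitting $M=D-N$ into positive diagonal $D$ and nonnegative off-diagonal $N$, via the Neumann series $M^{-1}=\sum_{m\geq 0}(D^{-1}N)^m D^{-1}$, which converges because positive definiteness yields $\rho(D^{-1}N)<1$ and sums to an entrywise strictly positive matrix since $N$ has strictly positive off-diagonals), the inverse $M^{-1}$ has strictly positive entries. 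Hence $T^k$ is acute, as required. The main conceptual step is the translation of acute dihedral angles into the Stieltjes property of $M$; everything else is M-matrix inverse-positivity applied routinely.
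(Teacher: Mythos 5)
Your proof is correct, but it takes a genuinely different route from the paper's. The paper argues on the tangent-cone side: it generates $T_S(q)=F\times C$ by the rays $\rho_i$ (projections of the edge directions $v_i-q$ orthogonally to $F$), identifies the angle between $\rho_i$ and $\rho_j$ with a dihedral angle of the lower-dimensional face $\conv(F\cup\{v_i,v_j\})$, and therefore must invoke Fiedler's theorem that every face of a simplex with acute dihedral angles again has acute dihedral angles, before concluding with a spherical-diameter convexity argument. You argue on the polar side: the normal cone at $q$ is generated by the outer unit normals of the facets containing the face of $q$; the acute dihedral angles of $\Delta$ itself (no heredity statement needed) make their Gram matrix $M$ a Stieltjes matrix; and polarity reduces the acuteness of $T^k$ to the entrywise positivity of $M^{-1}$, which you extract from the Neumann series. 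What your approach buys is the elimination of Fiedler's theorem as an input --- your inverse-positivity computation is a self-contained substitute for it, in the spirit of Remark~\ref{remark:different_corollary_proof} --- at the price of a small dose of M-matrix theory. Two steps you leave implicit are routine but worth a line each: (i) the passage from ``the generators $-n_{(a)}^*$ pairwise have positive inner product'' to ``every pair of nonzero points of $T^k$ subtends an acute angle'' follows by bilinearity, since the inner product of two conic combinations of the generators is a sum of nonnegative terms at least one of which is strictly positive; (ii) the bound $\rho(D^{-1}N)<1$ holds because $D^{-1}N$ is similar to the symmetric nonnegative matrix $D^{-1/2}ND^{-1/2}$, whose spectral radius equals its largest eigenvalue by Perron--Frobenius, and that eigenvalue is $<1$ exactly when $M=D-N$ is positive definite.
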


\begin{proof}
Consider a simplex $S = \conv\{v_0, v_1, \ldots, v_d\} \subset \mathbb{R}^d$ having all acute dihedral angles. It is known that any face of such a simplex also has only acute dihedral angles (see~\cite[Satz~4]{fiedler1957qualitative}~or~\cite[Proposition~2.7]{brandts2007dissection}).

Now consider a non-smooth point $q$ belonging to, say, $k$-dimensional ($0 \le k \le d-2$) face $F = \conv\{v_0, \ldots, v_k\}$. The corresponding tangent cone decomposes orthogonally as $T_S(q) = F \times C$, where $C$ is a simplicial cone ($(d-k)$-hedral angle) in some $(d-k)$-dimensional subspace $L$, generated by its extremal rays $\rho_{k+1}, \ldots, \rho_d$; let the ray $\rho_i$ be parallel to the face $\conv (F\cup\{v_i\})$ and be orthogonal to the face $F$.

Note that the angle between $\rho_i$ and $\rho_j$ in $L$ equals a certain dihedral angle of the $(k+2)$-dimensional face $\conv (F\cup\{v_i,v_j\})$, so this angle is acute. Consider the $(d-k)$-dimensional cone $C$ as a $(d-k-1)$-dimensional spherical simplex in $\mathbb S^{d-k-1}$. All its edges are less than $\frac \pi 2$, thus a  simple convexity argument implies that its diameter is attained at some its edge and is less than $\frac \pi 2$. Therefore, the acuteness condition is fulfilled.
\end{proof}

\begin{remark}
\label{remark:different_corollary_proof}
Corollary~\ref{corollary:simplex} can be proved without using Fiedler's theorem~\cite[Satz~4]{fiedler1957qualitative}. It can be directly shown that a simplex with only acute dihedral angles satisfies the condition of Theorem~\ref{thm:acuteweak} by considering what the normal fan of the simplex cuts on the sphere $\mathbb S^{d-1}$.
\end{remark}

In a certain particular case the acuteness condition may be omitted. The corresponding result is Theorem~\ref{theorem:any_body}, which we are going to prove now:

\begin{proof}[Proof of Theorem~\ref{theorem:any_body}]
Assume $q_1, \ldots, q_{d+1}$ form the shortest closed generalized trajectory in $K \subset \mathbb{R}^d$.

Consider the outer normals to support hyperplanes $H_1, \ldots, H_{d+1}$ at the points $q_1, \ldots, q_{d+1}$. They are opposite to the bisectors of $\widehat{q_{i-1}q_iq_{i+1}}$: $n_i = \frac{q_i - q_{i-1}}{|q_i - q_{i-1}|} - \frac{q_{i+1} - q_i}{|q_{i+1} - q_i|}$ (the indexing is cyclic).

Note that a positive combination of $n_i$ is zero. First, consider the case when $n_i$ span a proper hyperspace of $\mathbb{R}^d$. Then one of $n_i$'s can be dropped keeping the condition $0\in\conv\{n_i\}$ (this follows from the Carath\'eodory theorem). We also omit the corresponding $q_i$ from the trajectory. Then, the obtained polygonal line becomes strictly shorter but still cannot be translated into $\inte K$ (see Remark~\ref{remark:bezdeks}). Thus it remains to consider the case $0\in\inte\conv\{n_i\}$ (here we essentially use the assumption that the trajectory has the maximal number $d+1$ of bounces).


Second, we note that the trajectory does not have points of return (see Lemma~\ref{lemma:returns}).

Now assume that $a \rightarrow q \rightarrow b$ is a fragment of the shortest generalized trajectory near the non-smooth point $q \in \partial K$, and $a,q,b$ do not lie on the same line. Consider the cone $N_K(q)$ of outer normals. Since it is non-trivial, it contains rays from the origin other than the ray $\rho$ that is opposite to the bisector of $\widehat{aqb}$. We rotate $\rho$ slightly to obtain $\tilde{\rho} \in N_K(q), \tilde{\rho} \neq \rho$. Consider the support hyperplane $H$ at $q$ with outer normal $\tilde{\rho}$.


As shown in Lemma~\ref{lemma:minimize} below, the point $q$ can be shifted along $H$ so that the length $|a-q|+|q-b|$ becomes strictly smaller. It remains to show that, if we replace $a \rightarrow q \rightarrow b$ with $a \rightarrow \tilde{q} \rightarrow b$ in the trajectory, then the trajectory still cannot be translated into $\inte K$. After the replacement, one of the support hyperplanes at the vertices slightly rotates (its outer normal $\rho$ is replaced with $\tilde{\rho}$). Since the rotation is slight we can assume that the origin still belongs to the convex hull of normals to the support hyperplanes (recall that $0\in\inte\conv\{n_i\}$). Thus, Lemma~\ref{lemma:bezdek} yields the required statement.
\end{proof}

\begin{lemma}
\label{lemma:minimize}
Let $a,q,b$ be points in $\mathbb{R}^d$ not lying on the same line, $H$ be a hyperplane such that $q\in H$ and $a,b$ lie at the same closed halfspace bounded by $H$. Suppose that $H$ is not orthogonal to the bisector of $\widehat{aqb}$. Then $q$ does not deliver the minimum of $|a-r|+|r-b|$ subject to $r \in H$.
\end{lemma}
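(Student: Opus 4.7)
The plan is to invoke the classical reflection trick behind Heron's problem. Let $b'$ denote the reflection of $b$ across the hyperplane $H$. Since $a$ and $b$ lie on the same closed side of $H$, the point $b'$ lies on the opposite closed side. For every $r \in H$ one has $|r-b|=|r-b'|$, so
\[
|a-r|+|r-b| \;=\; |a-r|+|r-b'| \;\ge\; |a-b'|,
\]
with equality if and only if $r$ belongs to the segment $[a,b']$. Consequently, the minimum of $|a-r|+|r-b|$ over $r\in H$ equals $|a-b'|$, attained exactly on the (non-empty) intersection $[a,b']\cap H$. The whole problem is thus reduced to deciding whether $q$ lies on $[a,b']$.

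Next I would show that $q \in [a,b']$ forces the bisector of $\widehat{aqb}$ to be orthogonal to $H$, the contrapositive of which is exactly what we want. Indeed, if $q$ is on the segment $[a,b']$ then the rays from $q$ towards $a$ and towards $b'$ are opposite. The reflection across $H$ fixes $q \in H$, maps the ray $q \to b'$ onto the ray $q \to b$, and preserves the angle a ray makes with $H$. Therefore the rays $q\to a$ and $q\to b$ make equal angles with $H$ on the same side, which is precisely the condition that the bisector of $\widehat{aqb}$ is orthogonal to $H$. Since this contradicts the hypothesis, we conclude $q \notin [a,b']$, whence $|a-q|+|q-b| > |a-b'|$, and $q$ is not a minimizer.

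The only place that requires a brief care is the verification of a few degenerate possibilities. If $q$ coincided with $a$ or $b$, the points $a,q,b$ would be collinear, which is excluded; the equality $q=b'$ would force $b=q$ by reflecting across $H$, again excluded. If $b\in H$, then $b'=b$ and $q\in[a,b']$ would mean $q\in[a,b]$, contradicting non-collinearity; the case $a\in H$ is symmetric. Beyond these routine case checks, the proof is essentially the reflection argument and I do not anticipate a substantive obstacle.
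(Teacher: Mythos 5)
Your proof is correct and uses exactly the same reflection argument as the paper, which simply reflects $b$ in $H$ to obtain $b'$ and notes that $a,q,b'$ cannot be collinear under the hypotheses. You merely spell out the verification (that $q\in[a,b']$ would force the bisector of $\widehat{aqb}$ to be orthogonal to $H$) that the paper leaves implicit.
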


\begin{proof}
Reflect the point $b$ in $H$ and obtain the point $b'$. By the assumptions of the lemma, $a,q,b'$ do not lie on the same line, so $|a-q|+|q-b|$ can be made smaller if we shift $q$ to a point from $[a,b'] \cap H$.
\end{proof}

\section{Sufficient conditions in arbitrary normed spaces}
\label{section:non-euclidean}

Let us extend the proof of Theorem~\ref{theorem:any_body} to the case of the generalized reflection law in a normed space.

Let an $d$-dimensional real vector space $V = \mathbb{R}^d$ be endowed with a norm with unit ball $T^\circ$ (where $T^\circ \subset V$ is polar to a convex body $T \subset V^*$). We follow the notation of~\cite{akopyan2014elementary} and denote such a norm by $\|\cdot\|_T$.

By definition, $\|q\|_T = \max\limits_{p \in T} \langle p, q \rangle$, where $\langle \cdot, \cdot \rangle : V^* \times V \rightarrow \mathbb{R}$ is the canonical bilinear form of the duality between $V$ and $V^*$. Here we assume that $T$ contains the origin (although this can be relaxed to some extent), but is not necessarily centrally symmetric. Therefore the norm may be non-symmetric, in general, $\|q\|_T \neq \|-q\|_T$.
In what follows, we always assume that $T$ is smooth and thus $T^\circ$ is strictly convex.

We measure lengths in $V$ using the norm $\|\cdot\|_{T}$ and the billiard reflection rule is given by locally minimizing the length functional. We say that a polygonal line $q_{start} \rightarrow q_{refl} \rightarrow q_{end}$ (where $q_{refl} \in \partial K, q_{start} \in K, q_{end} \in K$) has a \emph{billiard reflection} at the point $q_{refl}$ if there exists a support hyperplane $H$ for the body $K$ at the point $q_{refl}$ such that the functional
$$
\varphi(q) = \|q_{end} - q\|_T + \|q - q_{start}\|_T
$$
has a local minimum at the point $q = q_{refl}$ under the constraint $q \in H$. If $q_{refl}$ belongs to the smooth piece of $\partial K$ we say that a \emph{classical} billiard reflection occurs.
In such a case one can rewrite the reflection rule in the differential form:
\begin{equation}
\label{equation:reflection}
p' - p = - \lambda n_K(q), \quad \lambda > 0.
\end{equation}
Here we define the momenta $p, p' \in \partial T \subset V^*$ before and after the reflection so that $p$ is a functional reaching its maximum at $q_{end} - q$, and $p'$ is a functional reaching its maximum at $q - q_{start}$ (if $T$ is strictly convex then such $p$ and $p'$ are uniquely defined).


Also here we define the outer normal to the body $K$ at a point $q \in \partial K$ as
$$
n_K(q) = d\|q\|_{K^\circ}, \quad n_K(q) \in \partial K^\circ.
$$
The cone $N_K(q)$ of outer normals is defined by
$$
N_K(q) = \{n \in V^*: \langle n, q' - q\rangle \le 0 \ \forall q' \in K\}.
$$
It can be easily checked that in the case of a smooth point $q \in \partial K$ the above definitions of normals agree: $N_K(q) = \{n_K(q)\}$.

Thus the \emph{generalized reflection law} looks like
\begin{equation}
\label{equation:generalizedreflection}
p' - p \in -N_K(q).
\end{equation}

We again use the notions
$$
\mathcal P_m(K) = \{(q_1,\ldots, q_m) : \{q_1,\ldots, q_m\} \ \text{doesn't fit into}\ (\inte K + t)\ \text{with} \ t\in V \} =
$$
$$
= \{(q_1,\ldots, q_m) : \{q_1,\ldots, q_m\} \ \text{doesn't fit into}\ (\alpha K+t)\ \text{with}\ \alpha\in (0,1),\ t\in V \}
$$
and
$$
\xi_T(K) = \min_{Q \in \mathcal Q_T(K)} \ell_T(Q),
$$
where $Q = (q_1,\ldots, q_m)$, $\ m \ge 2,$ ranges over the set $\mathcal Q_T(K)$ of all closed generalized billiard trajectories in $K$ with geometry defined by $T$. (Here we denote the length $\ell_T (q_1,\ldots, q_m) = \sum_{i=1}^m \|q_{i+1} - q_i\|_T$.)

The generalization of the main result of~\cite{bezdek2009shortest}, proved in~\cite{akopyan2014elementary}, is the following:

\begin{theorem}
\label{thm:bezdeksrevisited}
For any convex bodies $K \subset V$, $T \subset V^*$ ($T$ is smooth) containing the origins of $V$ and $V^*$ in their interiors the equality holds:
$$
\xi_T(K) = \min_{m\ge 2} \min_{Q\in \mathcal P_m(K)} \ell_T(Q);
$$
and furthermore, the minimum is attained at $m\le d + 1$.
\end{theorem}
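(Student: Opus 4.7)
The plan is to imitate the proof of Theorem~\ref{thm:bezdeks} by Bezdek and Bezdek, but with every Euclidean length replaced by the $T$-length $\|\cdot\|_T$ and every Euclidean unit velocity replaced by the dual momentum $p\in\partial T$. I will establish the two inequalities $\xi_T(K)\ge\min_{m,Q}\ell_T(Q)$ and $\xi_T(K)\le\min_{m,Q}\ell_T(Q)$ separately, and then deduce the bound $m\le d+1$ by a Carath\'eodory argument.

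For the inequality $\xi_T(K)\ge\min$, I would take an arbitrary closed generalized billiard trajectory $Q=(q_1,\ldots,q_m)$ and prove $Q\in\mathcal P_m(K)$. Let $p_i\in\partial T$ be the momentum dual to the edge from $q_i$ to $q_{i+1}$ (well-defined because $T$ is smooth). The generalized reflection law~\eqref{equation:generalizedreflection} at each vertex $q_i$ takes the form $p_i-p_{i-1}=\lambda_i n_i$ for some $\lambda_i\ge 0$ and $n_i\in N_K(q_i)\cap\partial K^\circ$, with the sign fixed by~\eqref{equation:reflection}. Summing the telescoping identity $\sum_i(p_i-p_{i-1})=0$ over the cycle produces $\sum_i\lambda_i n_i=0$ with $\sum_i\lambda_i>0$, hence $0\in\conv\{n_i\}$. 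Lemma~\ref{lemma:bezdek} is purely convex-geometric and applies unchanged in the normed setting, so it certifies that $Q$ cannot be translated into $\inte K$, giving $Q\in\mathcal P_m(K)$.

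For the converse, I would run a compactness--variation argument. After fixing $q_1=0$ to kill translation invariance, the sublevel sets $\{Q\in\mathcal P_m(K):\ell_T(Q)\le C\}$ are closed and bounded, hence compact, so $\ell_T$ attains its minimum at some $Q^*=(q_1^*,\ldots,q_m^*)$. By translating one may assume $Q^*\subset K$ tightly, so there exist support hyperplanes $H_i$ at $q_i^*$ with outer normals $n_i\in N_K(q_i^*)\cap\partial K^\circ$ and multipliers $\lambda_i\ge 0$ (not all zero) satisfying $\sum_i\lambda_i n_i=0$; vertices with $\lambda_i=0$ are discarded via Remark~\ref{remark:bezdeks}, which strictly shortens the polygon unless the vertex is fake. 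For an active $q_i^*$, a small tangential perturbation $q_i^*\mapsto q_i^*+tv$ with $v\in T_{q_i^*}H_i$ keeps $Q$ in $\mathcal P_m(K)$ by stability of the relation $0\in\conv\{n_i\}$ under continuous deformation, and the first-order condition $\langle p_{i-1}-p_i,v\rangle=0$ forces $p_i-p_{i-1}$ into the line spanned by $n_i$; an admissible transverse perturbation pins down the sign, and together these recover~\eqref{equation:generalizedreflection}. Therefore $Q^*$ is itself a closed generalized billiard trajectory, and $\xi_T(K)\le\ell_T(Q^*)$.

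The bound $m\le d+1$ then follows from applying Carath\'eodory's theorem to $0\in\conv\{n_i\}$ inside the $d$-dimensional dual space $V^*$: only $d+1$ of the $n_i$'s are ever needed, and the corresponding redundant vertices are dropped as above. The main obstacle I anticipate is the variational step: non-smooth points require $N_K(q_i^*)$ to be treated as a full cone rather than a single ray, the feasibility of tangential perturbations must be justified quantitatively (using the openness of the condition $0\in\inte\conv\{n_i\}$, possibly after enlarging the active set of support hyperplanes at a non-smooth vertex), and the sign of each Lagrange multiplier has to be fixed so that the resulting first-order condition matches~\eqref{equation:generalizedreflection} rather than its opposite.
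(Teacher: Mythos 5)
The paper does not actually reprove this theorem: it cites \cite{akopyan2014elementary}, where the identity is established for smooth $K$ and classical trajectories, and then (Remark~\ref{remark:xidef}) extends it to non-smooth $K$ by approximating $K$ in the Hausdorff metric and passing to the limit. Your proposal instead attempts a direct proof in the style of Bezdek--Bezdek, carried out in the normed setting and for non-smooth $K$ from the outset. The easy inequality $\xi_T(K)\ge\min$ (every closed generalized trajectory lies in $\mathcal P_m(K)$ via the telescoping momenta and Lemma~\ref{lemma:bezdek}) is sound, the reduction to $m\le d+1$ is standard, and the tangential-perturbation computation you sketch is indeed the right mechanism for recovering the reflection law~\eqref{equation:generalizedreflection} at a minimizer.

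The hard inequality, however, contains a genuine gap that you assert rather than prove: the claim that an optimally translated minimizer $Q^*$ lies in $K$ and admits support hyperplanes $H_i$ at its vertices whose outer normals $n_i$ satisfy $\sum_i\lambda_i n_i=0$ with $\lambda_i\ge 0$ not all zero. This is precisely the converse of Lemma~\ref{lemma:bezdek} and is the technical heart of the Bezdek--Bezdek argument: one must first show that a length-minimizer of $\mathcal P_m(K)$ fits into some translate of $K$ (otherwise it could be rescaled and shortened while remaining in $\mathcal P_m(K)$), and then extract the surrounding normals from the fact that $\bigcap_i(\inte K-q_i^*)=\emptyset$ while $\bigcap_i(K-q_i^*)\ni 0$, via a separation or Helly-type argument. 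Without this step the subsequent first-order analysis has no constraint structure to work with. You also explicitly flag, without resolving, the feasibility of the perturbations at non-smooth vertices and the sign of the multipliers; these are exactly the points where non-smoothness of $K$ bites, and they are the reason the paper sidesteps the issue entirely by proving the smooth case elsewhere and taking a Hausdorff limit. As written, your text is an outline of the proof in \cite{akopyan2014elementary} with its central lemma missing, not a complete argument; either supply that lemma or adopt the paper's approximation route.
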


\begin{remark}
\label{remark:bezdeksrevisited}
Lemma \ref{lemma:bezdek} still holds in this setting (see~\cite{akopyan2014elementary}). It is used in the proof of~\ref{thm:bezdeksrevisited}.
\end{remark}

\begin{remark}
\label{remark:xidef}
Actually, Theorem~\ref{thm:bezdeksrevisited} is proved in~\cite{akopyan2014elementary} only for smooth body $K$ and classical trajectories. But approximating non-smooth $K$ in Hausdorff metrics and passing to the limit we obtain the formulation above. Note that the formula of Theorem~\ref{thm:bezdeksrevisited} can be used as the definition of $\xi_T(K)$ for arbitrary $T$ and $K$ without any smoothness assumptions.
\end{remark}

To proceed further, we generalize Lemma~\ref{lemma:minimize}:

\begin{lemma}
\label{lemma:finslerminimize}
Let $a,q,b$ be points in $\mathbb{R}^d$ not lying on the same line, $H$ be a hyperplane with normal $n \in V^*$ such that $q\in H$ and $a,b$ lie at the same closed halfspace bounded by $H$. Suppose the length is measured using the norm with unit body $T^\circ$, such that $T$ is strictly convex.

Let $p,p' \in \partial T$ be the uniquely defined momenta of the segments $a \rightarrow q, q \rightarrow b$. Suppose that $\langle n, p' - p \rangle \neq 0$. Then $q$ does not deliver the minimum of $\|a-r\|_T+\|r-b\|_T$ subject to $r \in H$.
\end{lemma}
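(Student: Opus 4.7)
The plan is to argue by first-order optimality: I will compute the gradient of the functional
$$F(r) = \|a-r\|_T + \|r-b\|_T$$
at $r=q$ and use the hypothesis to conclude that this gradient is not proportional to the normal $n$ of $H$, so that $q$ fails the Lagrange condition for a constrained minimum on $H$; a suitable tangential perturbation will then strictly decrease $F$.

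The first ingredient is that strict convexity of $T$ forces the maximizer $p(v)\in\partial T$ of $\langle\cdot,v\rangle$ over $T$ to be unique for every $v\neq 0$; the envelope theorem then gives $\nabla\|v\|_T = p(v)$. Applying the chain rule,
$$\nabla_r F(r) = -\,p(a-r) + p(r-b),$$
and at $r=q$, after identifying these values with the momenta $p,p'$ of the directed segments $a\to q$ and $q\to b$, this gradient matches (up to the sign convention built into the definitions) with $p'-p \in V^*$.

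By the Lagrange multiplier criterion, $q$ is a local minimum of $F$ on $H$ exactly when $\nabla F(q)$ lies in the one-dimensional subspace spanned by $n$. The hypothesis $\langle n,p'-p\rangle\neq 0$ is precisely the failure of this proportionality, so the restriction of $\nabla F(q)$ to the tangent space of $H$ at $q$ does not vanish; therefore some direction $v\in T_q H$ satisfies $\partial_v F(q)\neq 0$, and moving $q$ along $H$ in the appropriate sign strictly decreases $F$, which contradicts the assumed minimality.

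I expect the main subtlety to be the identification of the chain-rule gradient $-p(a-q)+p(q-b)$ with $p'-p$, since in a non-symmetric norm one has in general $p(-w)\neq -p(w)$, so the orientation conventions for the momenta must be tracked carefully; with the conventions fixed as in the statement, however, the matching is direct, and the non-collinearity of $a,q,b$ ensures that the two momenta are genuinely distinct so the proportionality question is meaningful.
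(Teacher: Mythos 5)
Your overall strategy---differentiate $F$ via the envelope theorem (strict convexity of $T$ makes the maximizer $p(v)$ unique, hence the support function $\|\cdot\|_T$ differentiable away from the origin, with $d\|v\|_T=p(v)$), identify $dF(q)$ with $p'-p$, and then invoke the first-order/Lagrange condition for a constrained minimum on $H$---is exactly the paper's: its one-line proof simply cites the reflection law (\ref{equation:reflection}), which \emph{is} this first-order condition. Your worry about orientation conventions in a non-symmetric norm is also legitimate (the paper itself mixes $\|a-q\|_T$ with $\|q-a\|_T$), and you dispose of it reasonably.

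The genuine problem is the sentence ``The hypothesis $\langle n,p'-p\rangle\neq 0$ is precisely the failure of this proportionality,'' and the step built on it fails. If $p'-p=\mu n$ with $\mu\neq 0$---which is exactly what the Lagrange condition produces at an honest reflection point---then under any inner-product reading of the pairing $\langle n,p'-p\rangle=\mu\langle n,n\rangle\neq 0$: the written hypothesis is satisfied while $q$ \emph{is} the constrained minimizer. Concretely, in the Euclidean case take $H=\{y=0\}$, $n=(0,1)$, $a=(-1,-1)$, $b=(1,-1)$, $q=(0,0)$: then $p=(1,1)/\sqrt2$, $p'=(1,-1)/\sqrt2$, so $\langle n,p'-p\rangle=-\sqrt2\neq 0$, yet $q$ minimizes $\|a-r\|+\|r-b\|$ on $H$. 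So the lemma as literally stated is false (and the expression $\langle n,p'-p\rangle$ pairs two elements of $V^*$, which the canonical pairing does not even define). The hypothesis actually needed---and the one used when the lemma is applied in Theorem~\ref{theorem:noneuclid}, where $H$ is chosen with normal $\tilde\rho$ \emph{not} proportional to $-(p'-p)$---is that $p'-p$ is not a scalar multiple of $n$, i.e.\ that the restriction of $p'-p$ to the direction space $\ker n$ of $H$ is nonzero; compare the Euclidean Lemma~\ref{lemma:minimize}, whose condition is non-orthogonality of $H$ to the bisector. Under that corrected hypothesis your Lagrange argument closes the proof and coincides with the paper's; the error is that you asserted an equivalence between the written hypothesis and the one your argument requires instead of detecting and flagging the mismatch.
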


\begin{proof}
If $q$ delivered the minimum of $\|a-r\|_T+\|r-b\|_T$ subject to $r \in H$ then the reflection law~\ref{equation:reflection} would imply that $\langle n, p' - p \rangle = 0$.
\end{proof}

\begin{lemma}
\label{lemma:triangleinequality}
Let $a,q,b$ be such that $q$ does not belong to the segment $[a,b]$. Suppose the length is measured using the norm with unit body $T^\circ$. Then $\|a-q\|_T+\|q-b\|_T > \|a-b\|_T$ subject to $r \in H$.
\end{lemma}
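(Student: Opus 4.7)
The plan is to exploit the duality between $T$ and $T^\circ$ to reduce the statement to the rigidity case of linearity of a supporting functional, and then invoke the strict convexity of $T^\circ$ (which is guaranteed by the standing assumption that $T$ is smooth).

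Concretely, I would first pick a functional $p \in \partial T$ achieving $\langle p, a-b\rangle = \|a-b\|_T$; such a $p$ exists because $\|\cdot\|_T$ is defined as $\max_{p\in T}\langle p,\cdot\rangle$ and $T$ is compact. Writing $a-b = (a-q) + (q-b)$ and using $\langle p, v\rangle \le \|v\|_T$ for every $v$ (since $p\in T$), I obtain
\[
\|a-b\|_T = \langle p, a-q\rangle + \langle p, q-b\rangle \le \|a-q\|_T + \|q-b\|_T,
\]
which is the (non-strict) triangle inequality for free.

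Now I would argue strictness by contradiction: assume equality. Then both $\langle p, a-q\rangle = \|a-q\|_T$ and $\langle p, q-b\rangle = \|q-b\|_T$. Normalizing, the unit vectors $u_1 = (a-q)/\|a-q\|_T$ and $u_2 = (q-b)/\|q-b\|_T$ both lie on $\partial T^\circ$ and both satisfy $\langle p, u_i\rangle = 1$, i.e.\ they lie on the exposed face of $T^\circ$ cut out by the affine hyperplane $\{\langle p,\cdot\rangle = 1\}$. Since $T$ is smooth, its polar $T^\circ$ is strictly convex, so this exposed face is a single point; hence $u_1 = u_2$, meaning $a-q$ and $q-b$ are positive scalar multiples of one another. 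This places $q$ in the open segment between $a$ and $b$, contradicting the hypothesis.

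The main (and in fact only real) subtlety is the appeal to strict convexity of $T^\circ$: without the smoothness of $T$ one could have a flat face on $\partial T^\circ$ containing both $u_1$ and $u_2$, which would collapse the argument and permit equality when $q$ is not on $[a,b]$ but $a-q$ and $q-b$ merely point into the same flat direction. The smoothness hypothesis on $T$, already invoked earlier in the section, removes exactly this obstruction.
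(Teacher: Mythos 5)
Your proof is correct and follows exactly the route the paper intends: the paper's own proof is the one-line remark that the result ``follows easily from the strict convexity of $T^\circ$ (which follows from the smoothness of $T$),'' and your argument is precisely the standard fleshing-out of that remark via a supporting functional $p$ for $a-b$ and the equality case of the dual pairing. Nothing to add.
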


\begin{proof}
The result follows easily from the strict convexity of $T^\circ$ (which follows from the smoothness of $T$).
\end{proof}

\begin{remark}
\label{remark:bezdeksnoneuclid}
As in the Euclidean case, if a polygonal line $Q$ (that cannot be translated into $\inte K$) has more than $d+1$ vertices and it has no \emph{fake} vertices, then its length is strictly greater than $\xi_T(K)$. The argument is the same as in Remark~\ref{remark:bezdeks}, but makes use of Lemma~\ref{lemma:triangleinequality}.
\end{remark}

\begin{remark}
\label{remark:returnsnoneuclid}
Lemma \ref{lemma:triangleinequality} and Remark~\ref{remark:bezdeksnoneuclid} allow us to prove Lemma~\ref{lemma:returns} in the non-Euclidean case.
\end{remark}

Here comes our final result:

\begin{theorem}
\label{theorem:noneuclid}
Suppose the length is measured using the norm with strictly convex unit body $T^\circ$ such that $T$ is strictly convex too (in other words, $T$ is smooth and strictly convex).

If the shortest closed generalized trajectory in $K \subset \mathbb{R}^d$ has $d+1$ bounces, then it is classical, that is, it does not pass through non-smooth points of $\partial K$.
\end{theorem}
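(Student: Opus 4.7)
The plan is to mirror the proof of Theorem~\ref{theorem:any_body}, substituting each Euclidean ingredient with the normed-space analogue already established in this section. I would start with the vertices $q_1,\ldots,q_{d+1}$ of the shortest closed generalized trajectory and the associated momenta $p_i, p'_i \in \partial T$ at each $q_i$ as in~(\ref{equation:reflection}). The generalized reflection law~(\ref{equation:generalizedreflection}) furnishes elements $n_i := p_i - p'_i \in N_K(q_i)$; since the momentum of a segment agrees from either endpoint (by strict convexity of $T^\circ$), these telescope to $\sum_i n_i = 0$, giving $0 \in \conv\{n_1,\ldots,n_{d+1}\}$.

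I would then split into two cases exactly as in the Euclidean proof. If the $n_i$ span a proper subspace of $V^*$, Carath\'eodory's theorem lets me drop some $n_j$ while keeping $0 \in \conv\{n_k : k \neq j\}$; removing $q_j$ from the trajectory produces a strictly shorter polygonal line (strict shortness uses Lemma~\ref{lemma:triangleinequality}) that Lemma~\ref{lemma:bezdek} still certifies as non-translatable into $\inte K$, contradicting Theorem~\ref{thm:bezdeksrevisited}. Hence $0 \in \inte\conv\{n_i\}$, and this is precisely where the hypothesis of exactly $d+1$ bounces is used. Points of return are excluded by the normed analogue of Lemma~\ref{lemma:returns} described in Remark~\ref{remark:returnsnoneuclid}.

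Suppose for contradiction that some vertex $q = q_i$ lies at a non-smooth point of $\partial K$. Since $N_K(q)$ is then at least two-dimensional, I can pick $\tilde n \in N_K(q)$ arbitrarily close to $n_i$ yet not proportional to $n_i$; let $H$ be the support hyperplane at $q$ with outer normal $\tilde n$. As $p'_i - p_i$ is a nonzero scalar multiple of $n_i$, it is not proportional to $\tilde n$, so the hypothesis of Lemma~\ref{lemma:finslerminimize} is met and $q$ may be shifted along $H$ to some nearby $\tilde q$ satisfying
\[
\|q_{i-1} - \tilde q\|_T + \|\tilde q - q_{i+1}\|_T < \|q_{i-1} - q\|_T + \|q - q_{i+1}\|_T.
\]

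Finally I would replace $q_i$ by $\tilde q$ in the trajectory. The outer normals at the other vertices are unchanged, while at the $i$-th vertex $n_i$ is replaced by $\tilde n$; because $0$ lies in the \emph{interior} of $\conv\{n_j\}$, this property is preserved under sufficiently small perturbations, so Lemma~\ref{lemma:bezdek} (valid here by Remark~\ref{remark:bezdeksrevisited}) certifies the new, strictly shorter, polygonal line as still non-translatable into $\inte K$, contradicting minimality. The main technical obstacle is calibrating the two perturbation scales $\|\tilde n - n_i\|$ and $\|\tilde q - q\|$ so that the interior condition for the normal fan is preserved while the trajectory becomes strictly shorter; both are achieved by standard continuity arguments once the interior condition $0 \in \inte \conv\{n_i\}$ is secured.
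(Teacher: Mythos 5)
Your proposal is correct and follows essentially the same route as the paper's own proof: the telescoping sum of momentum differences, the Carath\'eodory reduction in the degenerate case to secure $0 \in \inte\conv\{n_i\}$, the exclusion of return points, the perturbation of the normal within $N_K(q)$ combined with Lemma~\ref{lemma:finslerminimize} to shorten the trajectory, and the continuity argument to keep the origin surrounded so that Lemma~\ref{lemma:bezdek} still applies. The only differences are cosmetic (sign conventions for the $n_i$ and the order of arguments in the non-symmetric norm), so there is nothing substantive to add.
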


\begin{proof}
Assume that $q_1, \ldots, q_{d+1}$ form the shortest closed generalized trajectory in $K \subset \mathbb{R}^d$.
Denote by $p_i$ the momentum along the trajectory segment $q_{i-1} \rightarrow q_i$ (the indexing is cyclic). Consider support hyperplanes $H_1, \ldots, H_{d+1}$ to $K$ at the points $q_1, \ldots, q_{d+1}$ with normals $p_2-p_1, p_3-p_2, \ldots, p_1-p_{d+1}$ respectively.

There are two possibilities: Either $p_2-p_1, \ldots, p_1-p_{d+1}$ with zero sum span all $V^*$ or they are contained in a hyperplane. The latter is impossible, and the argument is the same as in the proof of Theorem~\ref{theorem:any_body}. In this case we drop one of the $q_i$ keeping the condition $0\in\inte \conv \{p_{i+1}-p_i\}$; the obtained polygonal line is strictly shorter but still cannot be translated into $\inte K$ (see Remark~\ref{remark:bezdeksrevisited}). So we can consider that $0 \in \inte \conv \{p_2-p_1, \ldots, p_1-p_{d+1}\}$.

Next, we note that the trajectory does not have points of return (see Remark~\ref{remark:returnsnoneuclid}).

Then, assume that $a \rightarrow q \rightarrow b$ is a fragment of the shortest closed generalized trajectory near the non-smooth point $q \in \partial K$, and $a,q,b$ do not lie on the same line. Consider the cone $N_K(q)$ of outer normals. Since it is non-trivial, it contains rays from the origin other than the ray $\rho$ which is opposite to $p'-p$, where $p$ and $p'$ are the momenta of the trajectory parts $a \rightarrow q$ and $q \rightarrow b$. Let us rotate $\rho$ slightly to obtain $\tilde{\rho} \in N_K(q), \tilde{\rho} \neq \rho$. Consider the support hyperplane $H$ at $q$ with outer normal $\tilde{\rho}$.

As shown in Lemma~\ref{lemma:finslerminimize}, the point $q$ can be shifted along $H$ so the length $\|a-q\|_T+\|q-b\|_T$ becomes strictly smaller. It remains to show that, if we replace $a \rightarrow q \rightarrow b$ with $a \rightarrow \tilde{q} \rightarrow b$ in the trajectory, then it still cannot be translated into $\inte K$. After the replacement, one of the support hyperplanes at the vertices slightly rotates (its outer normal $\rho$ is replaced with $\tilde{\rho}$). Since the rotation is slight we can assume that the origin still belongs to convex hull of the normals to those support hyperplanes. Thus, the non-Euclidean version of Lemma~\ref{lemma:bezdek} yields the required conclusion.
\end{proof}

\bibliographystyle{abbrv}
\bibliography{billiards_in_acute_angles}{}
\end{document}